\renewcommand\eqref[1]{(\ref{#1})}
\title[Multidimensional Hardy-FLW Inequality]{Multidimensional Frank-Laptev-Weidl improvement of the Hardy Inequality}
\author[P. Roychowdhury]{Prasun Roychowdhury}
\address{
	Prasun Roychowdhury:
	\endgraf
	Department of Mathematics: Analysis, Logic and Discrete Mathematics
	\endgraf
	Ghent University, Krijgslaan 281, Building S8, B 9000 Ghent
	\endgraf
	Belgium
	\endgraf
	{\it E-mail address} {\rm prasunroychowdhury1994@gmail.com}}
\author[M. Ruzhansky]{Michael Ruzhansky}
\address{
	Michael Ruzhansky:
	\endgraf
	Department of Mathematics: Analysis, Logic and Discrete Mathematics
	\endgraf
	Ghent University, Krijgslaan 281, Building S8, B 9000 Ghent
	\endgraf
	Belgium
	\endgraf
	and
	\endgraf
	School of Mathematical Sciences
	\endgraf Queen  Mary University of London 
	\endgraf
	United Kingdom
	\endgraf
	{\it E-mail address} {\rm michael.ruzhansky@ugent.be}}
\author[D. Suragan]{Durvudkhan Suragan}
\address{
	Durvudkhan Suragan:
	\endgraf
	Department of Mathematics
	\endgraf
	Nazarbayev University
	\endgraf
	Kazakhstan
	\endgraf
	{\it E-mail address} {\rm durvudkhan.suragan@nu.edu.kz}}
\date{\today}
\subjclass[2010]{26D10, 35A23, 46E35}
\keywords{Hardy inequality, Sharp Constant, Symmetric rearrangements, Uncertainty principle.}
\theoremstyle{plain}
\newtheorem{theorem}{Theorem}[section]
\newtheorem{lemma}{Lemma}[section]
\newtheorem{corollary}{Corollary}[section]
\newtheorem{remark}{Remark}[section]
\numberwithin{equation}{section} \allowdisplaybreaks
\newcommand{\g}{\mathbb{R}^N}
\newcommand{\rn}{\mathbb{R}^N}
\newcommand{\sn}{\mathbb{S}^{N-1}}
\newcommand{\dr}{\:{\rm d}r}
\newcommand{\dx}{\:{\rm d}x}
\newcommand{\dt}{\:{\rm d}t}
\newcommand{\dsn}{\:{\rm d}\sigma}
\begin{document}
	\begin{abstract}
		We establish a new improvement of the classical $L^p$-Hardy inequality on the multidimensional Euclidean space in the supercritical case. Recently, in \cite{rupert-22}, there has been a new kind of development of the one-dimensional Hardy inequality. Using some radialisation techniques of functions and then exploiting symmetric decreasing rearrangement arguments on the real line, the new multidimensional version of the Hardy inequality is given. Some consequences are also discussed.
	\end{abstract}
	\maketitle
	
	\section{Introduction}
	
	Nowadays, one of the most popular classical functional inequalities in the analysis is the Hardy inequality 
	\begin{equation}\label{Hardy-Leray}
		\int_{\mathbb{R}^N} \frac{|u(x)|^{p}}{|x|^{p}}\dx\leq \left|\frac{p}{N-p}\right|^{p}
		\int_{\mathbb{R}^{N}}|\nabla u(x)|^{p}\dx, \quad N<p<\infty,
	\end{equation}
	which holds for all $u\in C_{c}^\infty(\rn \setminus \{o\})$. This range of $p$ is called the supercritical case of the Hardy inequality in literature.  Note that
	the inequality is also valid for all $u\in C_{c}^\infty(\rn)$ if $N>p$, that is, in the subcritical case. However, in the critical case $N=p$ such inequality is not possible. 
	
	The inequality \eqref{Hardy-Leray} is an essential 
	higher dimensional extension of the one-dimen- sional inequality discovered by Hardy \cite{hardy}. The development of the famous Hardy inequality (in both its discrete and continuous forms) during the period 1906-1928 has its own history and we refer to \cite{prehistory}. It is well-known that the so-called Hardy constant
	$\left|\frac{p}{N-p}\right|^{p}$ is sharp and never attained (except trivial function).

	Therefore, one may want to improve \eqref{Hardy-Leray} by adding extra nonnegative terms on its left-hand side. Say $p=2$ and $N>2$, one may ask about the existence of nonnegative function $W\in L^1(\rn)$ such that the following inequality \begin{equation*}
		\int_{\mathbb{R}^N} W(x)|u|^{2}\dx+\int_{\mathbb{R}^N} \frac{|u(x)|^{2}}{|x|^{2}}\dx \leq 
		\left(\frac{2}{N-2}\right)^{2}\int_{\mathbb{R}^{N}}|\nabla u(x)|^{2}\dx,
	\end{equation*}
	holds for all $u\in C_c^\infty(\rn)$. But the operator $-\Delta_{\rn}-\frac{(N-2)^2}{4}\frac{1}{|x|^2}$ is known to be a \emph{critical} operator on $\rn\setminus\{o\}$ (see \cite{critical}) and an improvement of such quadratic form inequality is not possible. Also see \cite{dp} for the optimal $L^p$ Hardy-type inequalities. However, there is a huge set of references of works about improved Hardy inequalities on \emph{bounded} Euclidean domains after the seminal works of Brezis and Marcus \cite{brezis-1} and Brezis and V\'azquez \cite{brezis-2}. See also \cite{adi,adi-2,vaz} and references therein.

	The Hardy inequality \eqref{Hardy-Leray} plays an important role in several branches of mathematics such as partial differential equations, spectral theory, geometry, functional analysis, etc. Improvements of the Hardy inequality on bounded Euclidean domains containing origin and improvements of such inequality on Riemannian manifolds have attracted great attention and were investigated by many authors. Without any claim of completeness, we refer an interested reader to \cite{BFT,BFT2,FT,gaz,GM,hardy-book-rs, tak} which are excellent monographs for reviews of this subject and for the improvements of this inequality.

	Let $p > N $. In this paper, our main result states that for all $u\in C_{c}^\infty(\rn \setminus \{o\})$, the following new sharp inequality holds in terms of the \emph{polar coordinate} structure of $\rn$:
	\begin{align}\label{improvedHardy}
		\max \biggl\{\int_{0}^\infty r^{N-1} \sup_{0<s\leq r}\int_{\sn}&\frac{|u(s\sigma)|^p}{r^p}\dsn\dr,  \int_{0}^\infty  r^{N-1}\sup_{r\leq s<\infty}\int_{\sn}\frac{|u(s\sigma)|^p}{s^p}\dsn\dr\biggr\}  \nonumber\\&\leq   \bigg|\frac{p}{N-p}\bigg|^p \int_{\rn} \left|\nabla u(x)\right|^p{\rm d}x.
	\end{align}
	Here $r$ is the distance between a point $x\in\rn$ and the origin $o$ and $\sn$ is the $N$-dimensional unit sphere. Clearly, we have (see Remark \ref{rem-5})
	\begin{align*}
		\max \biggl\{\int_{0}^\infty r^{N-1} \sup_{0<s\leq r}\int_{\sn}&\frac{|u(s\sigma)|^p}{r^p}\dsn\dr,  \int_{0}^\infty  r^{N-1}\sup_{r\leq s<\infty}\int_{\sn}\frac{|u(s\sigma)|^p}{s^p}\dsn\dr\biggr\}  \\&\geq   \int_{\mathbb{R}^N} \frac{|u(x)|^{p}}{|x|^{p}}\dx,
	\end{align*}
	so that \eqref{improvedHardy} gives an improvement of \eqref{Hardy-Leray}. The $N=1$ case of \eqref{improvedHardy} (see also Theorem \ref{main_th_rad}) was established in the recent paper \cite{rupert-22}, that is, the authors proved the one dimensional $L^{p}$-version of the improved Hardy inequality and gave an interesting application in the theory of Schr\"odinger operators. So, our inequality \eqref{improvedHardy} extends the 1D Frank-Laptev-Weidl inequality from \cite{rupert-22} to dimension $1\leq N<p$. The works on the one-dimensional (similar) improvements of the Hardy inequality go back to \cite{Kac-Krein}, and \cite{Tomaselli}, see also the introductory discussions in \cite{rupert-22} and \cite{rs} for the discrete versions.

	First, we study the results for radial functions and then continue the discussion for the non-radial setup. One of the main tools we exploited is the norm-preservation of the symmetric decreasing rearrangements and in principle, one can see that this property holds on some Riemannian or/and sub-Riemannian manifolds. In the same spirit, our results can be extended to more general manifolds/spaces. Here are few references \cite{EGR-21,Carron,serena,FLLM,Kombe,VHN,SurveyHomGroupHardy,YSK} to revisit the work on Hardy's inequality in those spaces.

	Structure of the paper: In Section \ref{2}, we discuss some basic facts of symmetric decreasing rearrangements. Section \ref{3} is devoted to the main supporting lemmas, and then a few necessary tools are discussed. In Section \ref{4}, we prove our main results related to developing the new multidimensional Hardy inequality. Finally, in Section \ref{5}, a novel uncertainty principle on the Euclidean space is discussed.

	\section{Preliminaries}\label{2}
	Before stating the main results and their consequences, first, we will describe some preliminaries on symmetric decreasing rearrangements. After that, in this section, we will shortly discuss the polar coordinate decomposition and the radial version of the classical gradient operator on the $N$-dimensional Euclidean space $\rn$.

	\subsection{Symmetric decreasing rearrangements} Below we will quickly recall some definitions and facts about \emph{symmetric decreasing rearrangement}. For more details, we refer to \cite[Chapter 3]{lieb}, for example.

	Let $\Omega\subset\rn$ be a finite Borel measurable subset. Then the symmetrization of $\Omega$ (denoted by $\Omega^*$) is defined by the open ball $B(o\:;\:r):=\{x\in\rn\::\:|x|<r\}$, where $r=v_N^{-1/N}\text{vol}(\Omega)^{1/N}$ is the radius, where $v_N$ is the volume of the unit $N$-dimensional Euclidean ball and $o$ is the origin as the centre of the ball. Here we are only concerned with functions which \emph{vanish at infinity}. For a real number $\beta\in \mathbb{R}$, the level set $\{f>\beta\}$ of a function $f$ is denoted as 
	\begin{align*}
		\{f>\beta\}:=\{x\in \rn\: :\: f(x)>\beta\}.	
	\end{align*}

	We say that a function $f$ vanishes at infinity if
	\begin{align*}
		\text{vol}(\{|f|>\tau\})<\infty \space \text{ for all }\tau>0.
	\end{align*}

	Now for all $x\in \rn$, we define the symmetric decreasing rearrangement (or non-increasing rearrangement) of $f$, denoted by $f^*$, as follows
	\begin{align*}
		f^*(x):=\int_0^\infty \chi_{\{|f|>\tau\}^*}(x)\:{\rm d}\tau.
	\end{align*}

	Then by definition, $f^*$ becomes a nonnegative, radially symmetric, and non-increa\\-sing function. Therefore, for any $x\in \rn$, we have $f^*(x)=f^*(|x|)$ and one can consider $f^*$ as a real valued nonnegative function on $[0,\infty)$. Irrespective of several properties of $f^*$, a useful property in our context is
	\begin{align*}
		\text{vol}(\{|f|>\tau\})=\text{vol}(\{f^*>\tau\}) \text{ for all }\tau\geq 0.	
	\end{align*}

	By using the \emph{layer cake representation} and the above property, we have the following identity:
	\begin{align}\label{layer}
		\int_{\mathbb{R}^N}|f(x)|^p\dx=\int_{\mathbb{R}^N}|f^*(x)|^p\dx\:\: \text{ for all }p\geq1.
	\end{align}
	This relation will be very useful in the proofs.

	\subsection{Polar coordinates and radial gradient}
	Let $\rn$ be the $N$-dimensional Euclidean space with Lebesgue measure ${\rm d}x$. Then it admits the \emph{polar coordinate decomposition} with respect to the origin $o\in \rn$. In particular, for any $f\in L^1_{loc}(\rn)$ we have
	\begin{align}\label{metric_polar}
		\int_{\rn} f(x) \dx=\int_{0}^{\infty}\int_{\sn}f(r,\sigma)\:r^{N-1}\dsn\dr,
	\end{align}
	where for any $x\in \rn$ we write $x=(r,\sigma)\in [0,\infty)\times \sn$ with $r=\varrho(x,o)$ (also denoted as $|x|$) being the Euclidean distance between $x$ and the fixed point $o$ as the origin. Here and after $\sn=\{x\in\rn\: : \:|x|=1\}$ is the $N$-dimensional unit sphere with the surface measure $\dsn$. 
	
	We say that a function is radially symmetric if it depends only on the radial part. That is, if $f(x)$ is a radial function, then for any $x\in\rn$, we have
	\begin{align*}
		f(x)=f(|x|)=f(r) \text{ where } r=|x|,
	\end{align*}
	and $f$ can be considered as a function on $[0,\infty)$. Note that the radial gradient of a differentiable function $f(x)$ can be defined by
	\begin{align}\label{iden}
		\frac{\partial f}{\partial r}(x)=\frac{x}{|x|}\cdot \nabla f(x),
	\end{align}
	where $``\cdot "$ is the scalar product and $\nabla$ is the usual gradient on $\rn$.
	\medspace
	
	\section{Supporting lemmas}\label{3}
	This section deals with the establishment of some supporting results. First, we describe the weighted version of the Hardy inequality on the half-line involving symmetric decreasing (or non-increasing) rearrangement of the function. The arguments here follow  \cite{rupert-22} but extend those.
	
	\begin{lemma}\label{key_lem}
		Let $1<p<\infty$. Let $g$ be any nonnegative function on $(0,\infty)$. Assume $h$ is a strictly positive non-decreasing function on $(0,\infty)$ such that  $s h(r)\leq r h(s)$ for any $r,s\in(0,\infty)$ with $r\leq s$. Let $f$ be a locally absolutely continuous function on $(0,\infty)$. Then we have
		\begin{equation}\label{eqn_key_lem}
			\int_{0}^{\infty}g(r)\sup_{0<s<\infty}\bigg|\min\biggl\{\frac{1}{h(r)},\frac{1}{h(s)}\biggr\}\int_{0}^{s}f(t)\dt\bigg|^p\dr \leq \int_{0}^{\infty}g(r)\bigg|\frac{1}{h(r)}\int_{0}^{r}f^*(t)\dt\bigg|^p\dr,
		\end{equation}
		where $f^*$ is the non-increasing rearrangement of $f$.
	\end{lemma}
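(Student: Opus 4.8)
The plan is to reduce \eqref{eqn_key_lem} to a \emph{pointwise} inequality between the two integrands, namely that
\[
\sup_{0<s<\infty}\bigg|\min\Big\{\tfrac{1}{h(r)},\tfrac{1}{h(s)}\Big\}\int_{0}^{s}f(t)\dt\bigg|\ \le\ \frac{1}{h(r)}\int_{0}^{r}f^{*}(t)\dt \qquad\text{for every }r\in(0,\infty);
\]
once this is in hand, raising both sides to the power $p$, multiplying by the nonnegative weight $g(r)$, and integrating over $(0,\infty)$ yields the claim. (If $\int_0^r|f|\dt=\infty$ for some $r>0$ the right-hand side of \eqref{eqn_key_lem} is already infinite, so we may assume the relevant integrals are finite.) The first step is to use that $h$ is non-decreasing to resolve the minimum: for $0<s\le r$ one has $h(s)\le h(r)$, so $\min\{1/h(r),1/h(s)\}=1/h(r)$, while for $s\ge r$ one has $\min\{1/h(r),1/h(s)\}=1/h(s)$. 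Hence the supremum on the left equals $\max\{A(r),B(r)\}$ with $A(r):=\frac{1}{h(r)}\sup_{0<s\le r}\big|\int_0^s f\dt\big|$ and $B(r):=\sup_{r\le s<\infty}\frac{1}{h(s)}\big|\int_0^s f\dt\big|$, and it suffices to dominate each of $A(r)$ and $B(r)$ by $\frac{1}{h(r)}\int_0^r f^{*}\dt$.

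For $A(r)$ this is immediate: by the triangle inequality and nonnegativity of $|f|$ we have $\big|\int_0^s f\dt\big|\le\int_0^s|f|\dt\le\int_0^r|f|\dt$ for all $0<s\le r$, and the Hardy--Littlewood rearrangement inequality (a standard property of the non-increasing rearrangement, see e.g.\ \cite[Chapter 3]{lieb}) gives $\int_0^r|f|\dt\le\int_0^r f^{*}\dt$; thus $A(r)\le\frac{1}{h(r)}\int_0^r f^{*}\dt$.

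The substantive step is the estimate for $B(r)$. Put $\Psi(s):=\frac{1}{h(s)}\int_0^s f^{*}(t)\dt$; the same rearrangement inequality gives $\frac{1}{h(s)}\big|\int_0^s f\dt\big|\le\Psi(s)$ for every $s$, so $B(r)\le\sup_{s\ge r}\Psi(s)$. I claim $\Psi$ is non-increasing on $(0,\infty)$, which forces $\sup_{s\ge r}\Psi(s)=\Psi(r)=\frac{1}{h(r)}\int_0^r f^{*}\dt$ and completes the argument. To prove the monotonicity, fix $0<r\le s$. Since $f^{*}$ is non-increasing, $f^{*}(r)\le\frac{1}{r}\int_0^r f^{*}\dt$, whence $\int_r^s f^{*}\dt\le(s-r)f^{*}(r)\le\frac{s-r}{r}\int_0^r f^{*}\dt$ and therefore $\int_0^s f^{*}\dt\le\frac{s}{r}\int_0^r f^{*}\dt$. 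Combining this with the hypothesis $s\,h(r)\le r\,h(s)$ gives
\[
h(r)\int_0^s f^{*}\dt\ \le\ \frac{s\,h(r)}{r}\int_0^r f^{*}\dt\ \le\ h(s)\int_0^r f^{*}\dt ,
\]
i.e.\ $\Psi(s)\le\Psi(r)$.

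Putting the pieces together, $\max\{A(r),B(r)\}\le\frac{1}{h(r)}\int_0^r f^{*}(t)\dt$ for every $r$, and the integrated form is \eqref{eqn_key_lem}. I expect the only delicate point to be the monotonicity of $\Psi$: this is exactly where both hypotheses on $h$ are used — the monotonicity of $h$ to split the supremum into $A(r)$ and $B(r)$, and the inequality $s\,h(r)\le r\,h(s)$ to offset the fact that $t\mapsto\int_0^t f^{*}$ grows, the compensating ingredient being that the average $t\mapsto\frac{1}{t}\int_0^t f^{*}$ is non-increasing because $f^{*}$ is.
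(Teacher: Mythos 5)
Your proposal is correct and follows essentially the same route as the paper: reduce to the pointwise bound via the rearrangement inequality, split the supremum at $s=r$ using the monotonicity of $h$, and handle $s\ge r$ by the estimate $\int_0^s f^*\le \frac{s}{r}\int_0^r f^*$ combined with $s\,h(r)\le r\,h(s)$. The only cosmetic difference is that you prove that key estimate by the non-increasing average of $f^*$, whereas the paper uses the change of variables $f^*(t)\le f^*(rt/s)$; the two are interchangeable.
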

	\begin{proof}
		For any fixed $r>0$, we have
		\begin{align*}
			\sup_{0<s<\infty}\bigg|\min\biggl\{\frac{1}{h(r)},\frac{1}{h(s)}\biggr\}\int_{0}^{s}f(t)\dt\bigg|\leq \sup_{0<s<\infty}\min\biggl\{\frac{1}{h(r)},\frac{1}{h(s)}\biggr\}\int_{0}^{s}f^*(t)\dt.
		\end{align*}
		The above follows from 
		\begin{align*}
			\bigg|\int_{0}^{s}f(t)\dt\bigg|\leq \int_{0}^{s}|f(t)|\dt \leq \int_{0}^{s}f^*(t)\dt.
		\end{align*}
		The advantage of $f^*$ is that it is non-increasing and this fact enables computing the supremum.
		
		{\bf Case 1:} Let $0<s\leq r<\infty$. Then using the non-decreasing property of $h$, we have 
		\begin{align*}
			\min\biggl\{\frac{1}{h(r)},\frac{1}{h(s)}\biggr\}\int_{0}^{s}f^*(t)\dt=\frac{1}{h(r)}\int_{0}^{s}f^*(t)\dt\leq \frac{1}{h(r)}\int_{0}^{r}f^*(t)\dt.
		\end{align*}
		
		{\bf Case 2:} Let $0<r\leq s<\infty$. Then exploiting the increasing property of $h$ and the non-increasing nature of $f^*$ and a change of variable, we obtain
		\begin{align*}
			&\min\biggl\{\frac{1}{h(r)},\frac{1}{h(s)}\biggr\}\int_{0}^{s}f^*(t)\dt\\&=\frac{1}{h(s)}\int_{0}^{s}f^*(t)\dt\leq \frac{1}{h(s)}\int_{0}^{s}f^*(rt/s)\dt\\&=\frac{s}{rh(s)}\int_{0}^{r}f^*(v)\:{\rm d}v\leq\frac{1}{h(r)}\int_{0}^{r}f^*(t)\:{\rm d}t.
		\end{align*}

		Thus, combining both cases, for all $r,s\in (0,\infty)$ we have 
		\begin{align*}
		\min\biggl\{\frac{1}{h(r)},\frac{1}{h(s)}\biggr\}\int_{0}^{s}f^*(t)\dt \leq \frac{1}{h(r)}\int_{0}^{r}f^*(t)\dt.
		\end{align*}
It yields  		
			\begin{align*}
			\sup_{0<s<\infty}\left|\min\left\{\frac{1}{h(r)},\frac{1}{h(s)}\right\}\int_{0}^{s}f(t)\dt \right|^{p} \leq \left| \frac{1}{h(r)}\int_{0}^{r}f^*(t)\dt\right|^{p},
		\end{align*}
		which completes the proof. 
	\end{proof}
	
	\medspace
	
	Let us consider a special case in Lemma \ref{key_lem}.
	\begin{corollary}\label{key_cor}
		Let $1\leq N<p< \infty$. Then for all $f\in C_c^\infty(0,\infty)$ the following weighted inequality holds:
		\begin{equation}\label{eqn_key_cor}
			\int_{0}^{\infty}r^{N-1}\sup_{0<s<\infty}\bigg|\min\biggl\{\frac{1}{r},\frac{1}{s}\biggr\}\int_{0}^{s}f(t)\dt\bigg|^p\dr \leq \bigg|\frac{p}{N-p}\bigg|^p\int_{0}^{\infty}r^{N-1}\:|f^*(r)|^p\dr,
		\end{equation}
		where $f^*$ is the non-increasing rearrangement of $f$. 
	\end{corollary}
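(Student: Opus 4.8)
The plan is to obtain Corollary \ref{key_cor} as the special case of Lemma \ref{key_lem} with weight $g(r)=r^{N-1}$ and profile $h(r)=r$, and then to control the resulting right-hand side by the classical one-dimensional weighted Hardy inequality on the half-line, which is exactly where the supercritical range $1\le N<p$ is used.

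First I would check that this choice satisfies the hypotheses of Lemma \ref{key_lem}: $g(r)=r^{N-1}\ge 0$ on $(0,\infty)$; $h(r)=r$ is strictly positive and non-decreasing; and the comparison $s\,h(r)\le r\,h(s)$ for $r\le s$ is trivial, since it reads $sr\le rs$. As every $f\in C_c^\infty(0,\infty)$ is locally absolutely continuous, Lemma \ref{key_lem} applies and gives
$$\int_0^\infty r^{N-1}\sup_{0<s<\infty}\left|\min\left\{\frac{1}{r},\frac{1}{s}\right\}\int_0^s f(t)\dt\right|^p\dr \;\le\; \int_0^\infty r^{N-1-p}\left|\int_0^r f^*(t)\dt\right|^p\dr .$$
So the corollary reduces to the weighted Hardy inequality
$$\int_0^\infty r^{N-1-p}\left(\int_0^r f^*(t)\dt\right)^p\dr \;\le\; \left|\frac{p}{N-p}\right|^p\int_0^\infty r^{N-1}\,|f^*(r)|^p\dr ,$$
where $f^*\ge 0$ is a nonnegative, locally absolutely continuous function on $(0,\infty)$, bounded by $\|f\|_{L^\infty}$ and supported in a bounded interval (both inherited from $f\in C_c^\infty(0,\infty)$).

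To prove this weighted inequality I would set $F(r):=\int_0^r f^*(t)\dt$, so $F$ is nonnegative, non-decreasing, $F'=f^*$ a.e., and $F(r)\le\min\{\|f\|_{L^\infty}\,r,\ \|f^*\|_{L^1}\}$. Integrating by parts in $\int_0^\infty r^{N-1-p}F(r)^p\dr$ against the antiderivative $\frac{1}{N-p}r^{N-p}$ of $r^{N-1-p}$, the boundary terms vanish: near $r=0$ one has $r^{N-p}F(r)^p\le\|f\|_{L^\infty}^p\,r^{N}\to0$, and near $r=\infty$ one has $r^{N-p}F(r)^p\le\|f^*\|_{L^1}^p\,r^{N-p}\to0$ since $N<p$; the same two bounds also show the left-hand integral is finite. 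This gives
$$\int_0^\infty r^{N-1-p}F(r)^p\dr \;=\; \frac{p}{p-N}\int_0^\infty r^{N-p}\,F(r)^{p-1}f^*(r)\dr .$$
Then I would apply Hölder's inequality with exponents $p$ and $p/(p-1)$, distributing the weight so that $F(r)^{p-1}$ carries $r^{(N-1-p)(p-1)/p}$ and $f^*(r)$ carries $r^{(N-1)/p}$; one checks $\frac{N-1-p}{p}(p-1)+\frac{N-1}{p}=N-p$, so this split is valid. Writing $I$ for the left-hand integral, this yields $I\le\frac{p}{p-N}\,I^{(p-1)/p}\big(\int_0^\infty r^{N-1}|f^*|^p\dr\big)^{1/p}$; dividing by $I^{(p-1)/p}$ (legitimate since $0<I<\infty$, the case $I=0$ being trivial) and taking $p$-th powers gives exactly the claimed bound with constant $\big(\frac{p}{p-N}\big)^p=\big|\frac{p}{N-p}\big|^p$. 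Chaining this with the display obtained from Lemma \ref{key_lem} finishes the proof.

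The only genuinely delicate point is the integration by parts: one must be sure the boundary contributions at $0$ and $\infty$ vanish and that the integrals are finite, which is precisely where the supercritical range $N<p$ together with the boundedness and compact support of $f$ (passed to $f^*$) are needed. Everything else is exponent bookkeeping in Hölder's inequality.
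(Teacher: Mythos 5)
Your proposal is correct and takes essentially the same route as the paper: apply Lemma \ref{key_lem} with $g(r)=r^{N-1}$ and $h(r)=r$, and then control $\int_0^\infty r^{N-1-p}\big|\int_0^r f^*(t)\dt\big|^p\dr$ by the weighted one-dimensional Hardy inequality for the primitive of $f^*$. The only difference is that the paper invokes this weighted Hardy inequality by citation (e.g.\ \cite{hlp}, \cite{rst}), whereas you prove it directly via integration by parts and H\"older, and your treatment of the boundary terms and exponents (using $N<p$ and the bounds $F(r)\le\min\{\|f\|_{L^\infty}r,\|f^*\|_{L^1}\}$) is sound.
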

	\begin{proof}
		Let us set $g(r)=r^{N-1}$ and $h(r)=r$ for $r\in (0,\infty)$. Substituting these in Lemma \ref{key_lem}, we have
		\begin{equation}\label{inters}
			\int_{0}^{\infty}r^{N-1}\sup_{0<s<\infty}\bigg|\min\biggl\{\frac{1}{r},\frac{1}{s}\biggr\}\int_{0}^{s}f(t)\dt\bigg|^p\dr \leq\int_{0}^{\infty}r^{N-p-1}\bigg|\int_{0}^{r}f^*(t)\dt\bigg|^p\dr.
		\end{equation}
		Using the weighted one dimensional $L^p$-Hardy inequality (see \cite[Theorem 330]{hlp}, or e.g. \cite[Theorem 3.1]{rst}) for the function $\int_{0}^{r}f^*(t)\dt$ and noticing the fact $\frac{\partial}{\partial r}\int_{0}^{r}f^*(t)\dt=f^*(r)$ which follows from the fundamental theorem of calculus, we obtain
		\begin{align*}
			\int_{0}^{\infty}r^{N-p-1}\bigg|\int_{0}^{r}f^*(t)\dt\bigg|^p\dr \leq \bigg|\frac{p}{p-N}\bigg|^p\int_{0}^{\infty}r^{N-1}|f^*(r)|^p\dr. 
		\end{align*}
		Applying this to the right-hand side of \eqref{inters}, we obtain \eqref{eqn_key_cor}.
	\end{proof}

	For a continuous function on a compact set, the supremum is attained, and exploiting this idea, one can have the following result.
	\begin{lemma}\label{help_lem_3}
		Let $g\in C(\rn)$ be a nonnegative function. Then, on a compact subset $\mathcal{K}\subset\rn$, we have
		\begin{align*}
			\big(\sup_{x\in\mathcal{K}}g(x)\big)^p=\big(\sup_{x\in\mathcal{K}}g^p(x)\big)
		\end{align*}
		for $1\leq p<\infty$.
	\end{lemma}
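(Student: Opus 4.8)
The plan is to reduce the claim to two elementary facts: the monotonicity of the map $t\mapsto t^p$ on $[0,\infty)$ for $p\geq 1$, and the fact that a continuous function on a compact set attains its supremum. First I would put $M:=\sup_{x\in\mathcal{K}}g(x)$. Since $\mathcal{K}$ is compact and $g$ is continuous, the extreme value theorem yields a point $x_0\in\mathcal{K}$ with $g(x_0)=M$; in particular $M$ is finite and, since $g\geq 0$, nonnegative.

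For the inequality $\sup_{x\in\mathcal{K}}g^p(x)\leq M^p$ I would note that $0\leq g(x)\leq M$ for every $x\in\mathcal{K}$, and because $t\mapsto t^p$ is non-decreasing on $[0,\infty)$ this gives $g^p(x)\leq M^p$; taking the supremum over $x\in\mathcal{K}$ produces the bound. For the reverse inequality I would simply evaluate at $x_0$: since $g^p(x_0)=M^p$, we get $\sup_{x\in\mathcal{K}}g^p(x)\geq g^p(x_0)=M^p$. Combining the two gives
\begin{align*}
	\big(\sup_{x\in\mathcal{K}}g(x)\big)^p=M^p=\sup_{x\in\mathcal{K}}g^p(x),
\end{align*}
which is the assertion.

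I do not expect a genuine obstacle here; the statement is essentially a bookkeeping lemma. The only points worth stating carefully are that the nonnegativity of $g$ is exactly what licenses the use of the monotonicity of the power function (which would fail for negative arguments when $p$ is not an odd integer), and that compactness together with continuity is used precisely to guarantee the supremum is attained rather than merely approached; without attainment one would instead invoke the continuity of $t\mapsto t^p$ on $[0,\infty)$ to pass to the limit along a maximizing sequence, which gives the same conclusion.
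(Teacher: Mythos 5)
Your proof is correct and follows exactly the idea the paper indicates (the paper states the lemma without a written proof, only noting that continuity on a compact set makes the supremum attained, which together with the monotonicity of $t\mapsto t^p$ on $[0,\infty)$ gives the identity). Your remark that attainment could be replaced by passing to the limit along a maximizing sequence is a harmless extra observation, not a deviation.
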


	\section{Improvement of the classical Hardy inequality}\label{4}
	This section's primary goal is to establish an improved version of the classical Hardy inequality on the $N$-dimensional Euclidean space $\rn$ in the supercritical case. Our strategy is first to develop it for radial functions, and then, by using the radialisation technique, we settle the non-radial version.

	\subsection{Radial version of the results}
	First, we present the results for the compactly supported smooth radial function space denoted as $C_{c,rad}^\infty(\rn\setminus \{o\})$.
	
	\begin{theorem}\label{main_th_rad}
		Let $1\leq N< p<\infty$. Then we have
		\begin{align}\label{eqn_main_th_rad}
			\int_{\rn} \max\biggl\{ \sup_{\bar{B}(o\:;\:|x|)\setminus\{o\}}\frac{|u(y)|^p}{|x|^p}\:,\:\sup_{B^c(o\:;\:|x|)}\frac{|u(y)|^p}{|y|^p}\biggr\}\dx\leq\bigg|\frac{p}{N-p}\bigg|^p\int_{\rn}\bigg|\frac{x}{|x|}\cdot \nabla u(x)\bigg|^p\dx
		\end{align}
		for all $u\in  C_{c,rad}^\infty(\rn\setminus \{o\})$.
	\end{theorem}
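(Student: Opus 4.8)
The plan is to pass to polar coordinates, reduce \eqref{eqn_main_th_rad} to the one-dimensional weighted inequality of Corollary \ref{key_cor}, and then to absorb the rearrangement that survives on the right-hand side of \eqref{eqn_key_cor} by a rearrangement inequality against the increasing weight $r^{N-1}$.

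First I would perform the radial reduction. As $u$ is radial, write $u(x)=u(r)$ with $r=|x|$; the profile $r\mapsto u(r)$ belongs to $C_c^\infty(0,\infty)$ since $\mathrm{supp}\,u$ is compact in $\rn\setminus\{o\}$, and $\frac{x}{|x|}\cdot\nabla u(x)=u'(r)$ by \eqref{iden}. By \eqref{metric_polar}, the right-hand side of \eqref{eqn_main_th_rad} equals $|\sn|\,\bigl|\frac{p}{N-p}\bigr|^p\int_0^\infty r^{N-1}|u'(r)|^p\dr$. For the left-hand side, for radial $u$ one has $\sup_{\bar B(o\:;\:|x|)\setminus\{o\}}\frac{|u(y)|^p}{|x|^p}=\sup_{0<s\le r}\frac{|u(s)|^p}{r^p}$ and $\sup_{B^c(o\:;\:|x|)}\frac{|u(y)|^p}{|y|^p}=\sup_{r\le s<\infty}\frac{|u(s)|^p}{s^p}$; since $\frac1r=\min\{\frac1r,\frac1s\}$ for $s\le r$ and $\frac1s=\min\{\frac1r,\frac1s\}$ for $s\ge r$, the maximum of these two quantities equals $\sup_{0<s<\infty}\bigl(\min\{\frac1r,\frac1s\}\,|u(s)|\bigr)^p$, the power coming out of the supremum because $t\mapsto t^p$ is increasing (cf.\ Lemma \ref{help_lem_3}). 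Hence, by \eqref{metric_polar} again, the left-hand side of \eqref{eqn_main_th_rad} equals $|\sn|\int_0^\infty r^{N-1}\sup_{0<s<\infty}\bigl|\min\{\frac1r,\frac1s\}\,u(s)\bigr|^p\dr$.

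Next, the profile vanishes near $0$, so $u(s)=\int_0^s u'(t)\dt$ by the fundamental theorem of calculus, and Corollary \ref{key_cor} with $f=u'\in C_c^\infty(0,\infty)$ bounds the last expression by $|\sn|\,\bigl|\frac{p}{N-p}\bigr|^p\int_0^\infty r^{N-1}\bigl|(u')^*(r)\bigr|^p\dr$. It remains to show $\int_0^\infty r^{N-1}\bigl|(u')^*(r)\bigr|^p\dr\le\int_0^\infty r^{N-1}|u'(r)|^p\dr$. Putting $g=|u'|^p$ (so $g^*=\bigl((u')^*\bigr)^p$), the layer-cake identity gives $\int_0^\infty r^{N-1}g(r)\dr=\int_0^\infty\bigl(\int_{\{g>\tau\}}r^{N-1}\dr\bigr)\:{\rm d}\tau$, and likewise for $g^*$, whose super-level sets are the intervals $\{g^*>\tau\}=[0,\,|\{g>\tau\}|\,)$; since $r\mapsto r^{N-1}$ is non-decreasing, $\int_E r^{N-1}\dr$ is minimized over Borel sets $E\subset(0,\infty)$ of prescribed measure when $E$ is an interval with left endpoint $0$, so $\int_{\{g>\tau\}}r^{N-1}\dr\ge\int_{\{g^*>\tau\}}r^{N-1}\dr$ for every $\tau>0$, and integrating in $\tau$ yields the claim. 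Chaining these estimates proves \eqref{eqn_main_th_rad}.

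The polar reduction and the use of Corollary \ref{key_cor} simply transcribe the one-dimensional argument of \cite{rupert-22}; the only genuinely new ingredient is the final rearrangement inequality, which is precisely what accommodates the polar Jacobian $r^{N-1}$ and which, since that weight is constant when $N=1$, is an equality there and thus played no role in the one-dimensional case. I therefore expect the passage from $(u')^*$ back to $u'$ — that is, checking that the rearrangement left over on the right of \eqref{eqn_key_cor} costs nothing against the increasing weight $r^{N-1}$ — to be the step deserving the most care, the rest being routine bookkeeping with polar coordinates and the fundamental theorem of calculus.
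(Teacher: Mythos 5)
Your argument is correct and follows the paper's proof almost verbatim in its skeleton: pass to polar coordinates, rewrite the maximum of the two suprema as $\sup_{0<s<\infty}\big|\min\{\tfrac1r,\tfrac1s\}\,u(s)\big|^p$, and apply Corollary \ref{key_cor} with $f=u'$ after writing $u(s)=\int_0^s u'(t)\dt$. The only place where you genuinely deviate is the final step: the paper absorbs the surviving rearrangement by reading $\int_{\sn}\int_0^\infty r^{N-1}\big|\big(\tfrac{\partial u}{\partial r}\big)^*(r)\big|^p\dr\dsn$ as an integral over $\rn$ and invoking the norm-preservation identity \eqref{layer}, i.e.\ it uses the equality valid for the $N$-dimensional symmetric decreasing rearrangement of the radial function $\tfrac{\partial u}{\partial |x|}$, whereas you keep the one-dimensional non-increasing rearrangement $(u')^*$ delivered by Lemma \ref{key_lem}--Corollary \ref{key_cor} and prove the one-sided comparison $\int_0^\infty r^{N-1}|(u')^*(r)|^p\dr\le\int_0^\infty r^{N-1}|u'(r)|^p\dr$ by layer cake together with the bathtub principle for the non-decreasing weight $r^{N-1}$. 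This is a sound and in fact slightly more careful way to close the argument, since the one-dimensional rearrangement (taken with respect to $\dr$) is not the same object as the $N$-dimensional one (taken with respect to $r^{N-1}\dr$), so only such an inequality, not the equality \eqref{layer}, is literally available for it --- and, as you note, this weighted comparison is precisely the ingredient that is vacuous when $N=1$ and new relative to \cite{rupert-22}; the paper's route is shorter at the price of implicitly identifying the rearrangement in Corollary \ref{key_cor} with the radial profile of the $N$-dimensional one.
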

	\begin{proof}
		Since $u\in C_{c,rad}^\infty(\rn\setminus \{o\})$ we have $u(y)=u(|y|)=u(s)$ for $s=|y|$, that is, $u\in C_c^\infty(0,\infty)$. Recall the polar coordinate decomposition $x=(r,\sigma)$ where $r=|x|\in (0,\infty)$ and $\sigma=\frac{x}{|x|}\in \sn$. Then we deduce
		\begin{align}\label{im-1}
			&\int_{\rn} \max\biggl\{ \sup_{\bar{B}(o\:;\:|x|)\setminus\{o\}}\frac{|u(y)|^p}{|x|^p}\:,\: \sup_{B^c(o\:;\:|x|)}\frac{|u(y)|^p}{|y|^p}\biggr\}\dx\nonumber\\
			&= \int_{0}^\infty \int_{\sn}r^{N-1}\:\max\biggl\{ \sup_{0<s\leq r}\frac{|u(s)|^p}{r^p}\:,\: \sup_{r\leq s<\infty}\frac{|u(s)|^p}{s^p}\biggr\}\dsn\:\dr.  
		\end{align}
		Before going further let us mention the following identity
		\begin{align*}
			&\sup_{0<s<\infty}\bigg|\min\biggl\{\frac{1}{r},\frac{1}{s}\biggr\}\;u(s)\bigg|^p\nonumber\\&=\sup_{0<s<\infty}\min\biggl\{\frac{1}{r^p},\frac{1}{s^p}\biggr\}\;|u(s)|^p\nonumber\\&=\max\biggl\{ \sup_{0<s\leq r}\min\biggl\{\frac{1}{r^p},\frac{1}{s^p}\biggr\}\;|u(s)|^p\:,\: \sup_{r\leq s<\infty}\min\biggl\{\frac{1}{r^p},\frac{1}{s^p}\biggr\}\;|u(s)|^p\biggr\}\nonumber\\&=\max\biggl\{ \sup_{0<s\leq r}\frac{|u(s)|^p}{r^p}\:,\: \sup_{r\leq s<\infty}\frac{|u(s)|^p}{s^p}\biggr\}.
		\end{align*}
		By using this and continuing with the polar coordinate decomposition \eqref{im-1}, we have
		\begin{align*}
			&\int_{\rn} \max\biggl\{ \sup_{\bar{B}(o\:;\:|x|)\setminus\{o\}}\frac{|u(y)|^p}{|x|^p}\:,\: \sup_{B^c(o\:;\:|x|)}\frac{|u(y)|^p}{|y|^p}\biggr\}\dx\\
			&=\int_{\sn}\int_{0}^\infty r^{N-1}\:\sup_{0<s<\infty}\bigg|\min\biggl\{\frac{1}{r},\frac{1}{s}\biggr\}\;u(s)\bigg|^p\dr\dsn\\
			&=\int_{\sn}\int_{0}^\infty r^{N-1}\:\sup_{0<s<\infty}\bigg|\min\biggl\{\frac{1}{r},\frac{1}{s}\biggr\}\;\int_0^s\frac{\partial u}{\partial t}(t)\dt\bigg|^p\dr\dsn\\&\overset{\mathrm{ Corollary}\:\ref{key_cor} }{\leq}\bigg|\frac{p}{N-p}\bigg|^p \int_{\sn}\int_{0}^{\infty}r^{N-1}\bigg|\bigg(\frac{\partial u}{\partial r}\bigg)^*(r)\bigg|^p\dr\dsn\\
			&=\bigg|\frac{p}{N-p}\bigg|^p\int_{\rn}\bigg|\bigg(\frac{\partial u}{\partial |x|}\bigg)^*(x)\bigg|^p\dx\\
			&=\bigg|\frac{p}{N-p}\bigg|^p\int_{\rn}\bigg|\frac{\partial u}{\partial |x|}(x)\bigg|^p\dx\\
			&=\bigg|\frac{p}{N-p}\bigg|^p\int_{\rn} \bigg|\frac{x}{|x|}\cdot \nabla u(x)\bigg|^p\dx.
		\end{align*}
		In the middle we have used Corollary \ref{key_cor} for  $f(t)=\frac{\partial u}{\partial t}(t)$. Finally, by using \eqref{layer} for the function $\frac{\partial u}{\partial r}$ which vanishes at infinity because of the compact support of the smooth function $u$ and by using the identity \eqref{iden} the desired result follows.
	\end{proof}

	\subsection{Non-radial setting of the results}
	Now we describe the non-radial version of Theorem \ref{main_th_rad}. Before that, it should be mentioned that constructing some non-radial inequality from the radial one, the radialisation method is one of the common tools of functional inequalities. Let $u\in L^1(\rn)$, then for any $1<p<\infty$, we define the radial symmetric function $\tilde{u}$ as follows:
	\begin{align*}
		\tilde{u}(x)=\tilde{u}(r):=\bigg(\frac{1}{\omega_N}\int_{\sn}|u(r\sigma)|^p\dsn\bigg)^{\frac{1}{p}} \text{ for any }x\in\rn,
	\end{align*}
	where $r=|x|$, $\sigma=\frac{x}{|x|}$, and $\omega_N$ is the surface area of the $N$-dimensional sphere $\sn$.

	\begin{lemma}\label{tran_lem}
		Let $1<p<\infty$ and let $f$ be any nonnegative measurable radial function on $\rn$. Then for any $u\in C^1(\rn)$, we have
		\begin{equation}\label{eqn_tran_lem}
			\int_{\rn}f(x)\bigg|\frac{x}{|x|}\cdot \nabla \tilde{u}(x)\bigg|^p\dx\leq \int_{\rn}f(x)\bigg|\frac{x}{|x|}\cdot \nabla u(x)\bigg|^p\dx,
		\end{equation}
		where $\tilde{u}(x)$ is the earlier defined radial symmetric version of $u(x)$.
	\end{lemma}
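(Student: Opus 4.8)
The plan is to reduce the inequality to a pointwise (in the radial variable $r$) statement about the one-dimensional derivative $\frac{\partial}{\partial r}$ and then invoke the defining formula of $\tilde u$ together with Minkowski's integral inequality in $L^p(\sn)$. First I would observe that, since $\tilde u$ is radial, $\frac{x}{|x|}\cdot\nabla\tilde u(x)=\frac{\partial\tilde u}{\partial r}(r)$, and compute this derivative explicitly from the definition $\tilde u(r)=\bigl(\frac{1}{\omega_N}\int_{\sn}|u(r\sigma)|^p\dsn\bigr)^{1/p}$. Differentiating under the integral sign (legitimate since $u\in C^1$ and the integration is over the compact sphere), one gets, at any $r$ where $\tilde u(r)>0$,
\begin{equation*}
\frac{\partial\tilde u}{\partial r}(r)=\frac{1}{\omega_N}\,\tilde u(r)^{1-p}\int_{\sn}|u(r\sigma)|^{p-1}\,\mathrm{sgn}\,u(r\sigma)\,\frac{\partial u}{\partial r}(r\sigma)\dsn,
\end{equation*}
so that by the triangle inequality and Hölder's inequality with exponents $\frac{p}{p-1}$ and $p$,
\begin{equation*}
\Bigl|\frac{\partial\tilde u}{\partial r}(r)\Bigr|\le\frac{1}{\omega_N}\,\tilde u(r)^{1-p}\Bigl(\int_{\sn}|u(r\sigma)|^{p}\dsn\Bigr)^{\frac{p-1}{p}}\Bigl(\int_{\sn}\Bigl|\frac{\partial u}{\partial r}(r\sigma)\Bigr|^{p}\dsn\Bigr)^{\frac1p}.
\end{equation*}
Since $\bigl(\int_{\sn}|u(r\sigma)|^{p}\dsn\bigr)^{(p-1)/p}=(\omega_N)^{(p-1)/p}\tilde u(r)^{p-1}$, the factors $\tilde u(r)^{1-p}$ and $\tilde u(r)^{p-1}$ cancel, leaving
\begin{equation*}
\Bigl|\frac{\partial\tilde u}{\partial r}(r)\Bigr|^{p}\le\frac{1}{\omega_N}\int_{\sn}\Bigl|\frac{\partial u}{\partial r}(r\sigma)\Bigr|^{p}\dsn=\frac{1}{\omega_N}\int_{\sn}\Bigl|\frac{\sigma}{|\sigma|}\cdot\nabla u(r\sigma)\Bigr|^{p}\dsn,
\end{equation*}
using the identity \eqref{iden}.

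Next I would multiply this pointwise bound by $f(r)\,r^{N-1}$ (recall $f$ is radial, hence a function of $r$ alone, and nonnegative) and integrate over $r\in(0,\infty)$. On the left this reproduces, via the polar decomposition \eqref{metric_polar} and the fact that the integrand is radial, exactly $\int_{\rn}f(x)\bigl|\frac{x}{|x|}\cdot\nabla\tilde u(x)\bigr|^p\dx$ up to the constant $\omega_N$; on the right the $\frac{1}{\omega_N}$ and the $\dsn$ integral combine with $\int_0^\infty f(r)r^{N-1}(\cdot)\dr$ to give precisely $\int_{\rn}f(x)\bigl|\frac{x}{|x|}\cdot\nabla u(x)\bigr|^p\dx$, again by \eqref{metric_polar}. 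This yields \eqref{eqn_tran_lem}.

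The one delicate point — and the step I expect to be the main obstacle — is the set where $\tilde u(r)=0$, i.e. where $u(r\sigma)=0$ for a.e.\ $\sigma$: there the formula for $\frac{\partial\tilde u}{\partial r}$ derived above is not directly valid, and one must check that $\tilde u$ is still (locally absolutely) continuous with a well-defined weak radial derivative there, so that the integration argument goes through. This is handled in the standard way: one shows $\tilde u$ is locally Lipschitz away from such degenerate radii and, at a radius $r_0$ with $\tilde u(r_0)=0$, uses that $r_0$ is a minimum of the nonnegative function $\tilde u$, so either $\tilde u$ is identically zero in a neighbourhood (and the inequality is trivial there) or one may work on the open set $\{\tilde u>0\}$, where the computation above applies, and pass to the limit using that the right-hand side of \eqref{eqn_tran_lem} controls everything. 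Alternatively, and more cleanly, one regularises by replacing $\tilde u$ with $\bigl(\varepsilon^p+\frac1{\omega_N}\int_{\sn}|u(r\sigma)|^p\dsn\bigr)^{1/p}$, runs the above argument (now with no vanishing denominator), and lets $\varepsilon\downarrow0$ with dominated convergence. A second, minor, point to verify is the differentiation under the integral sign and the measurability/finiteness needed to apply Fubini when converting between the $\rn$ integrals and the iterated $r$–$\sigma$ integrals; both are immediate from $u\in C^1(\rn)$ and, in the application, the compact support, but should be noted.
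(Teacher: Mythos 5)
Your proposal is correct and follows essentially the same route as the paper's proof: differentiate $\tilde u(r)=\bigl(\tfrac{1}{\omega_N}\int_{\sn}|u(r\sigma)|^p\dsn\bigr)^{1/p}$ under the integral sign, apply H\"older on $\sn$ with exponents $\tfrac{p}{p-1}$ and $p$ so the powers of $\int_{\sn}|u(r\sigma)|^p\dsn$ cancel, and then integrate in polar coordinates against $f(r)r^{N-1}$. Your additional care about the set where $\tilde u(r)=0$ (via the positivity set or the $\varepsilon$-regularisation) addresses a point the paper passes over silently, but it does not change the argument.
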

	\begin{proof}
		Let $u\in C^1(\rn)$. By using the H\"older inequality, we have
		\begin{align*}
			&\bigg|\frac{x}{|x|}\cdot \nabla \tilde{u}(x)\bigg|	= \bigg|\frac{x}{|x|}\cdot \nabla \bigg(\frac{1}{\omega_N}\int_{\sn}|u(r\sigma)|^p\dsn\bigg)^{\frac{1}{p}}\bigg|\\&=\frac{1}{(\omega_N)^{1/p}}\:\frac{1}{p}\: \bigg(\int_{\sn}|u(r\sigma)|^p\dsn\bigg)^{\frac{1}{p}-1}\bigg|\int_{\sn}p\:|u(r\sigma)|^{p-2}u(r\sigma)\:\frac{x}{|x|}\cdot \nabla u(r\sigma)\dsn\bigg|\\&\leq \frac{1}{(\omega_N)^{1/p}}\: \bigg(\int_{\sn}|u(r\sigma)|^p\dsn\bigg)^{\frac{1-p}{p}}\times \\&\quad\quad\quad\quad\quad\quad\quad\quad\quad\quad\bigg(\int_{\sn}|u(r\sigma)|^p\dsn\bigg)^{\frac{p-1}{p}}\bigg(\int_{\sn}\bigg|\frac{x}{|x|}\cdot \nabla u(r\sigma)\bigg|^p\dsn\bigg)^{\frac{1}{p}}\\&= \frac{1}{(\omega_N)^{1/p}}\:\bigg(\int_{\sn}\bigg|\frac{x}{|x|}\cdot \nabla u(x)\bigg|^p\dsn\bigg)^{\frac{1}{p}}.
		\end{align*}
		
		Now multiplying both sides by $f(x)$ and exploiting the above we derive
		\begin{align*}
			&\int_{\rn}f(x)\bigg|\frac{x}{|x|}\cdot \nabla \tilde{u}(x)\bigg|^p\dx\\&=\int_{0}^\infty\int_{\sn}f(r)\:r^{N-1}\bigg|\frac{x}{|x|}\cdot \nabla \tilde{u}(x)\bigg|^p\dsn\dr\\&\leq\omega_N\int_{0}^\infty f(r)\:r^{N-1}\:\frac{1}{\omega_N}\:\bigg(\int_{\sn}\bigg|\frac{x}{|x|}\cdot \nabla u(x)\bigg|^p\dsn\bigg)\dr\\&=\int_{0}^\infty f(r)\:r^{N-1}\:\bigg(\int_{\sn}\bigg|\frac{x}{|x|}\cdot \nabla u(x)\bigg|^p\dsn\bigg)\dr\\&=\int_{\rn}f(x)\bigg|\frac{x}{|x|}\cdot \nabla u(x)\bigg|^p\dx.
		\end{align*}
		Starting with the polar coordinate decomposition, in between, we have used Fubini's theorem to arrive at the inequality \eqref{eqn_tran_lem}.
	\end{proof}

	Now we proceed with another important lemma.
	\begin{lemma}\label{sup_lem}
		Let  $1\leq p<\infty$ and let $f$ be a nonnegative measurable radial weight function on $\rn$. Then for any $u\in C_c(\rn\setminus\{o\})$, we have
		\begin{align}\label{eqn_sup_lem}
		\max \biggl\{\int_{0}^\infty f(r)\: &r^{N-1} \sup_{0<s\leq r}\int_{\sn}\frac{|u(s\sigma)|^p}{r^p}\dsn\dr,\nonumber\\&  \int_{0}^\infty  f(r)\:r^{N-1}\sup_{r\leq s<\infty}\int_{\sn}\frac{|u(s\sigma)|^p}{s^p}\dsn\dr\biggr\}\nonumber\\&\leq  \int_{\rn} f(x)\max\biggl\{ \sup_{\bar{B}(o\:;\:|x|)\setminus\{o\}}\frac{|\tilde{u}(y)|^p}{|x|^p}\:,\: \sup_{B^c(o\:;\:|x|)}\frac{|\tilde{u}(y)|^p}{|y|^p}\biggr\}\dx,
		\end{align}
		where $\tilde{u}(x)$ is the earlier defined radial symmetric version of $u(x)$.
	\end{lemma}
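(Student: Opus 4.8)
The plan is to pass to polar coordinates on both sides of \eqref{eqn_sup_lem} and to reduce the claim to the elementary inequality comparing $\max\int$ with $\int\max$. The key point is that the radialisation $\tilde{u}$ enters only through the definitional identity $\int_{\sn}|u(t\sigma)|^{p}\dsn=\omega_N|\tilde{u}(t)|^{p}$, so that no rearrangement or H\"older argument is needed for this particular lemma.

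First I would set $U(t):=\int_{\sn}|u(t\sigma)|^{p}\dsn$ for $t>0$. Since $u\in C_c(\rn\setminus\{o\})$, the function $U$ is nonnegative, continuous on $(0,\infty)$ and supported in a compact subset of $(0,\infty)$; in particular $U$ is bounded, so every supremum occurring below is finite. Moreover $r\mapsto\sup_{0<t\le r}U(t)$ is non-decreasing and $r\mapsto\sup_{t\ge r}U(t)/t^{p}$ is non-increasing, hence the radial integrands arising below are measurable and all integrals are well defined in $[0,\infty]$. By the definition of $\tilde{u}$ we have $|\tilde{u}(t)|^{p}=U(t)/\omega_N$.

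Next I would rewrite the right-hand side of \eqref{eqn_sup_lem}. Since $\tilde{u}$ is radial, $\sup_{\bar{B}(o\:;\:|x|)\setminus\{o\}}|\tilde{u}(y)|^{p}=\frac{1}{\omega_N}\sup_{0<t\le|x|}U(t)$ and $\sup_{B^{c}(o\:;\:|x|)}\frac{|\tilde{u}(y)|^{p}}{|y|^{p}}=\frac{1}{\omega_N}\sup_{t\ge|x|}\frac{U(t)}{t^{p}}$. Inserting these, applying the polar coordinate formula \eqref{metric_polar} (so that the integration over $\sn$ produces a factor $\omega_N$ cancelling the $1/\omega_N$) and pulling the nonnegative weight $f(r)\,r^{N-1}$ into the maximum, the right-hand side becomes $\int_{0}^{\infty}\max\{A(r),B(r)\}\dr$, where $A(r):=f(r)\,r^{N-1-p}\sup_{0<t\le r}U(t)$ and $B(r):=f(r)\,r^{N-1}\sup_{t\ge r}U(t)/t^{p}$. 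In the same way, since the radius $r$ is fixed while $s$ varies in each supremum on the left, $\sup_{0<s\le r}\int_{\sn}\frac{|u(s\sigma)|^{p}}{r^{p}}\dsn=\frac{1}{r^{p}}\sup_{0<s\le r}U(s)$ and $\sup_{r\le s<\infty}\int_{\sn}\frac{|u(s\sigma)|^{p}}{s^{p}}\dsn=\sup_{s\ge r}U(s)/s^{p}$, so the left-hand side of \eqref{eqn_sup_lem} equals $\max\bigl\{\int_{0}^{\infty}A(r)\dr,\ \int_{0}^{\infty}B(r)\dr\bigr\}$.

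Finally, I would conclude from the trivial pointwise bounds $A\le\max\{A,B\}$ and $B\le\max\{A,B\}$, which give $\max\{\int_{0}^{\infty}A\dr,\int_{0}^{\infty}B\dr\}\le\int_{0}^{\infty}\max\{A,B\}\dr$, i.e. exactly \eqref{eqn_sup_lem}. The argument is soft; the only thing to keep track of is where the outer $\max$ sits — outside the radial integral on the left, inside it on the right — since this is precisely the step that turns $\max\int$ into $\int\max$ and fixes the direction of the inequality. I do not expect any genuine obstacle.
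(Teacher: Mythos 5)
Your proposal is correct and is essentially the paper's own argument: pass to polar coordinates, use the identity $\omega_N|\tilde{u}(t)|^{p}=\int_{\sn}|u(t\sigma)|^{p}\dsn$ (with the $\omega_N$ from the angular integration cancelling the $1/\omega_N$ in the definition of $\tilde{u}$), and finish with the elementary bound $\max\bigl\{\int A,\int B\bigr\}\leq\int\max\{A,B\}$. No discrepancies to report.
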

	\begin{proof}
		Setting $s=|y|$ and $r=|x|$ and using polar coordinates, we compute 
		\begin{align*}
			&\int_{\rn} f(x)\max\biggl\{ \sup_{\bar{B}(o\:;\:|x|)\setminus\{o\}}\frac{|\tilde{u}(y)|^p}{|x|^p}\:,\: \sup_{B^c(o\:;\:|x|)}\frac{|\tilde{u}(y)|^p}{|y|^p}\biggr\}\dx\\
			&=\int_{0}^\infty f(r)\:r^{N-1} \max\biggl\{ \sup_{\bar{B}(o;|x|)\setminus\{o\}}\frac{\int_{\sn}|u(s\sigma)|^p{\rm d}\sigma}{|x|^p}, \sup_{B^c(o;|x|)}\frac{\int_{\sn}|u(s\sigma)|^p{\rm d}\sigma}{|y|^p}\biggr\}\:{\rm d}r\\
			&=\int_{0}^\infty \max\biggl\{ f(r)\:r^{N-1} \sup_{0<s\leq r}\frac{\int_{\sn}|u(s\sigma)|^p\dsn}{r^p}, \\
			&\quad\quad\quad\quad\quad\quad\quad\quad f(r)\:r^{N-1} \sup_{r\leq s<\infty}\frac{\int_{\sn}|u(s\sigma)|^p\dsn}{s^p}\biggr\}\dr\\
			&\geq\max\biggl\{  \int_{0}^\infty f(r)r^{N-1} \sup_{0<s\leq r}\int_{\sn}\frac{|u(s\sigma)|^p}{r^p}\dsn\dr,\\& \quad\quad\quad\quad\quad\quad\quad\quad\int_{0}^\infty  f(r)r^{N-1}\sup_{r\leq s<\infty}\int_{\sn}\frac{|u(s\sigma)|^p}{s^p}\dsn\dr\biggr\}.
		\end{align*}
	Here we have used the fact $\int\max\{a(x),b(x)\}\,\dx\geq \max\{\int a(x)\,\dx,\int b(x)\,\dx\}$.
	\end{proof}

	We are now in a position to derive the non-radial version of the result from the previous subsection which is the main contribution of this note.
	\begin{theorem}\label{main_th}
		Let $1\leq N<p<\infty$. Then, for all $u\in C_{c}^\infty(\rn \setminus \{o\})$, we have
		\begin{align}\label{eqn_main_th}
				\max \biggl\{\int_{0}^\infty r^{N-1} \sup_{0<s\leq r}\int_{\sn}&\frac{|u(s\sigma)|^p}{r^p}\dsn\dr,  \int_{0}^\infty  r^{N-1}\sup_{r\leq s<\infty}\int_{\sn}\frac{|u(s\sigma)|^p}{s^p}\dsn\dr\biggr\}\nonumber\\&\leq  \bigg|\frac{p}{N-p}\bigg|^p\int_{\rn}\bigg|\frac{x}{|x|}\cdot \nabla u(x)\bigg|^p\dx,
		\end{align}
		with the sharp constant. More precisely, the Hardy constant $\big|\frac{p}{N-p}\big|^p$ is sharp in the sense that no inequality of the form 
		\begin{align*}
				\max \biggl\{\int_{0}^\infty r^{N-1} \sup_{0<s\leq r}\int_{\sn}&\frac{|u(s\sigma)|^p}{r^p}\dsn\dr,  \int_{0}^\infty  r^{N-1}\sup_{r\leq s<\infty}\int_{\sn}\frac{|u(s\sigma)|^p}{s^p}\dsn\dr\biggr\}\\&\leq  C\: \int_{\rn}\bigg|\frac{x}{|x|}\cdot \nabla u(x)\bigg|^p\dx,
		\end{align*}
		holds, for  all $ u \in C_{c}^{\infty}(\rn \setminus \{o\})$, when $C < \big|\frac{p}{N-p}\big|^p$.
	\end{theorem}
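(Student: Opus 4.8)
The plan is to obtain \eqref{eqn_main_th} by concatenating the three preparatory results of this section, and then to establish sharpness by testing against the classical radial near-extremizers of \eqref{Hardy-Leray}. Concretely, fix $u\in C_c^\infty(\rn\setminus\{o\})$ and let $\tilde u$ be its radialisation. First I would invoke Lemma \ref{sup_lem} with the trivial weight $f\equiv 1$, which bounds the left-hand side of \eqref{eqn_main_th} by $\int_{\rn}\max\bigl\{\sup_{\bar B(o\,;\,|x|)\setminus\{o\}}|\tilde u(y)|^p/|x|^p,\ \sup_{B^c(o\,;\,|x|)}|\tilde u(y)|^p/|y|^p\bigr\}\dx$. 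Since $\tilde u$ is radial and compactly supported away from the origin, I would then apply the radial Theorem \ref{main_th_rad} to $\tilde u$, bounding this quantity by $|p/(N-p)|^p\int_{\rn}|\tfrac{x}{|x|}\cdot\nabla\tilde u(x)|^p\dx$. Finally, Lemma \ref{tran_lem} with $f\equiv 1$ gives $\int_{\rn}|\tfrac{x}{|x|}\cdot\nabla\tilde u|^p\dx\le\int_{\rn}|\tfrac{x}{|x|}\cdot\nabla u|^p\dx$, and chaining the three bounds produces \eqref{eqn_main_th}. Thus the ``work'' of the theorem is already contained in Lemmas \ref{sup_lem}, \ref{tran_lem} and Theorem \ref{main_th_rad}, and the proof is essentially their synthesis.

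The one point that is not purely mechanical — and hence the main obstacle — is that $\tilde u$ need not be smooth, so Theorem \ref{main_th_rad} cannot be quoted verbatim. I would handle this by observing that $\tilde u$ is in fact locally Lipschitz on $(0,\infty)$ with compact support there: writing $\tilde u(r)=\omega_N^{-1/p}\|u(r\,\cdot)\|_{L^p(\sn)}$ and using $|u(r\sigma)-u(r'\sigma)|\le\|\nabla u\|_{\infty}|r-r'|$ along radial segments shows that $r\mapsto u(r\,\cdot)$ is a locally Lipschitz curve in $L^p(\sn)$, while the norm is $1$-Lipschitz. On the other hand, the proof of Theorem \ref{main_th_rad} only uses that its radial input is locally absolutely continuous with compact support — it proceeds through Corollary \ref{key_cor} and Lemma \ref{key_lem}, which require nothing more, together with the classical weighted one-dimensional Hardy inequality. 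Hence Theorem \ref{main_th_rad} does apply to $\tilde u$. (Alternatively one mollifies $\tilde u$ to a sequence in $C_{c,rad}^\infty(\rn\setminus\{o\})$, applies Theorem \ref{main_th_rad}, and passes to the limit, using $W^{1,p}$-convergence on the right-hand side and uniform convergence in the suprema on the left.)

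For the sharpness claim, I would argue by contradiction: suppose the stated inequality held with some $C<|p/(N-p)|^p$ for all $u\in C_c^\infty(\rn\setminus\{o\})$, and restrict to radial $u$. For such $u$ one has $|\tfrac{x}{|x|}\cdot\nabla u|=|\nabla u|$ pointwise, and by Remark \ref{rem-5} (equivalently, by retaining only the term $s=r$ in the first supremum) the left-hand side of the putative inequality dominates $\int_{\rn}|u(x)|^p/|x|^p\dx$; so $\int_{\rn}|u|^p/|x|^p\dx\le C\int_{\rn}|\nabla u|^p\dx$ would hold for every radial $u\in C_c^\infty(\rn\setminus\{o\})$. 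This contradicts the optimality of the constant $|p/(N-p)|^p$ in \eqref{Hardy-Leray}, which already holds within the radial class: taking $u_T$ to be a smooth truncation of $|x|^{(p-N)/p}$, equal to $|x|^{(p-N)/p}$ on the annulus $\{1/T\le|x|\le T\}$ and supported in a slightly larger annulus, a routine computation gives $\int_{\rn}|u_T|^p/|x|^p\dx=2\log T+O(1)$ and $\int_{\rn}|\nabla u_T|^p\dx=|p/(N-p)|^{-p}\,2\log T+O(1)$ as $T\to\infty$, so their ratio tends to $|p/(N-p)|^p>C$ — impossible for $T$ large. This forces $C\ge|p/(N-p)|^p$ and completes the proof.
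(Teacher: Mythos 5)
Your proof follows exactly the paper's route: Lemma \ref{sup_lem} with $f\equiv 1$, then Theorem \ref{main_th_rad} applied to the radialisation $\tilde u$, then Lemma \ref{tran_lem} with $f\equiv 1$, with sharpness reduced to the classical Hardy constant. The only differences are that you explicitly justify applying Theorem \ref{main_th_rad} to the merely Lipschitz $\tilde u$ (a point the paper passes over silently) and that you carry out the radial near-extremizer computation behind the sharpness claim, which the paper dispatches in one sentence; both additions are correct.
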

	\begin{proof}
		Let $u\in C_c^\infty(\rn \setminus \{o\})$ and $\tilde{u}$ be the radial symmetric function  associated to it. Then exploiting Lemma \ref{sup_lem} with $f(x)=1$ and then substituting the result into Theorem \ref{main_th_rad}, we deduce
		\begin{align*}
			&	\max \biggl\{\int_{0}^\infty r^{N-1} \sup_{0<s\leq r}\int_{\sn}\frac{|u(s\sigma)|^p}{r^p}\dsn\dr,  \int_{0}^\infty  r^{N-1}\sup_{r\leq s<\infty}\int_{\sn}\frac{|u(s\sigma)|^p}{s^p}\dsn\dr\biggr\}\\&\leq  \int_{\rn} \max\biggl\{ \sup_{\bar{B}(o\:;\:|x|)\setminus\{o\}}\frac{|\tilde{u}(y)|^p}{|x|^p}\:,\: \sup_{B^c(o\:;\:|x|)}\frac{|\tilde{u}(y)|^p}{|y|^p}\biggr\}\dx\\&\leq \bigg|\frac{p}{N-p}\bigg|^p\int_{\rn}\bigg|\frac{x}{|x|}\cdot \nabla \tilde{u}(x)\bigg|^p\dx.
		\end{align*}
		
		Next, using Lemma \ref{tran_lem} with $f(x)=1$, we have
		\begin{align*}
			\int_{\rn}\bigg|\frac{x}{|x|}\cdot \nabla \tilde{u}(x)\bigg|^p\dx\leq \int_{\rn}\bigg|\frac{x}{|x|}\cdot \nabla u(x)\bigg|^p\dx.
		\end{align*}
		
		Finally, combining the above two estimates we obtain
		\begin{align*}
				\max \biggl\{\int_{0}^\infty r^{N-1} \sup_{0<s\leq r}\int_{\sn}&\frac{|u(s\sigma)|^p}{r^p}\dsn\dr,  \int_{0}^\infty  r^{N-1}\sup_{r\leq s<\infty}\int_{\sn}\frac{|u(s\sigma)|^p}{s^p}\dsn\dr\biggr\}\\&\leq  \bigg|\frac{p}{N-p}\bigg|^p\int_{\rn}\bigg|\frac{x}{|x|}\cdot \nabla u(x)\bigg|^p\dx,
		\end{align*}
		which is the desired result \eqref{eqn_main_th}. The sharpness follows from the known sharp constant in the classical setup.
	\end{proof}

	\begin{remark}\label{rem-5}
		On the left-hand side of \eqref{eqn_main_th}, we have 
		\begin{align*}
			\max \biggl\{\int_{0}^\infty r^{N-1} \sup_{0<s\leq r}\int_{\sn}&\frac{|u(s\sigma)|^p}{r^p}\dsn\dr,  \int_{0}^\infty  r^{N-1}\sup_{r\leq s<\infty}\int_{\sn}\frac{|u(s\sigma)|^p}{s^p}\dsn\dr\biggr\}\\&\geq\int_{0}^{\infty}\int_{\sn}r^{N-1}\:\frac{|u(r\sigma)|^p}{r^p}\dsn\dr\\&=\int_{\rn}\frac{|u(x)|^p}{|x|^p}\dx.
		\end{align*}
		By using Gauss's lemma on the right-hand side of \eqref{eqn_main_th}, we get
		$$\bigg|\frac{x}{|x|}\cdot \nabla u(x)\bigg|\leq |\nabla u(x)|.$$ Combining the above two facts, we conclude that \eqref{eqn_main_th} is an improvement of the sharp Hardy inequality.
	\end{remark}

	\section{Uncertainty principle}\label{5}
	In this section, we focus on the Heisenberg-Pauli-Weyl (HPW) type uncertainty principle, which can be obtained immediately from the obtained new version of the Hardy inequality. HPW uncertainty principle has several physical and mathematical applications. In physics, uncertainty principles may be used for establishing the stability of matter. In quantum mechanics, the uncertainty principle implies that both the momentum and the position of an object cannot be exactly measured at the same time. The most well-known mathematical formulation of the uncertainty principle is probably the HPW uncertainty principle. First, we present the result for radial functions and then for the non-radial setting.
	
	\begin{theorem}
		Let $1\leq N<p<\infty$. Then, for any $u\in C_{c,rad}^\infty(\rn\setminus \{o\})$, the following uncertainty principle holds:
		\begin{align*}
		&\max\biggl\{\int_{\g}\sup_{\bar{B}(o\:;\:|x|)\setminus\{o\}}|u(y)|^p\dx,\int_{\g}\sup_{B^c(o\:;\:|x|)}|u(y)|^p\dx\biggr\}	\nonumber\\& \leq \bigg|\frac{p}{N-p}\bigg|\max\biggl\{ \bigg(\int_{\g}\sup_{\bar{B}(o\:;\:|x|)\setminus\{o\}}|x|^\frac{p}{p-1}|u(y)|^{p}\dx\bigg)^{\frac{p-1}{p}},\\&\quad\quad\quad\quad\quad\quad\quad\quad\bigg(\int_{\g}\sup_{B^c(o\:;\:|x|)}|y|^\frac{p}{p-1}|u(y)|^{p}\dx\bigg)^{\frac{p-1}{p}}\biggr\}\bigg(\int_{\g}\bigg|\frac{x}{|x|}\cdot \nabla  u(x)\bigg|^p\dx\bigg)^{\frac{1}{p}}.
	\end{align*}		
	\end{theorem}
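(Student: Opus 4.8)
The plan is to derive this Heisenberg–Pauli–Weyl type estimate as a direct consequence of the radial Hardy inequality in Theorem \ref{main_th_rad}, combined with H\"older's inequality applied to the supremum-weighted integrals. The starting point is the observation that for a radial $u$, the quantity $\sup_{\bar{B}(o;|x|)\setminus\{o\}}|u(y)|^p = \bigl(\sup_{0<s\le|x|}|u(s)|\bigr)^p$ can be written as $|x|^p \cdot \sup_{\bar{B}(o;|x|)\setminus\{o\}}\frac{|u(y)|^p}{|x|^p}$, and similarly the exterior supremum satisfies $\sup_{B^c(o;|x|)}|u(y)|^p = |y_\star|^p\sup_{B^c(o;|x|)}\frac{|u(y)|^p}{|y|^p}$ where the weight $|y|^p$ sits inside the supremum; the point is that one can factor out a weight $|x|^p$ (resp.\ $|y|^p$ suitably interpreted) at the cost of keeping track of it inside the sup.

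First I would treat each of the two terms inside the outer $\max$ on the left separately. For the interior term, write
\begin{align*}
\int_{\g}\sup_{\bar{B}(o;|x|)\setminus\{o\}}|u(y)|^p\dx
&= \int_{\g} |x|^{p}\,\Bigl(\sup_{\bar{B}(o;|x|)\setminus\{o\}}\frac{|u(y)|^p}{|x|^p}\Bigr)^{1/p\cdot p}\dx,
\end{align*}
and then apply H\"older's inequality with exponents $p$ and $p/(p-1)$ to split the integrand as a product of $\bigl(\sup_{\bar{B}(o;|x|)\setminus\{o\}}\frac{|u(y)|^p}{|x|^p}\bigr)^{1/p}$ against $|x|\bigl(\sup_{\bar{B}(o;|x|)\setminus\{o\}}\frac{|u(y)|^p}{|x|^p}\bigr)^{(p-1)/p}$, where in the second factor one absorbs the weight back in as $|x|^{p/(p-1)}|u(y)|^p$ inside the sup. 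This yields
\[
\int_{\g}\sup_{\bar{B}(o;|x|)\setminus\{o\}}|u(y)|^p\dx
\le \Bigl(\int_{\g}\sup_{\bar{B}(o;|x|)\setminus\{o\}}\frac{|u(y)|^p}{|x|^p}\dx\Bigr)^{1/p}
\Bigl(\int_{\g}\sup_{\bar{B}(o;|x|)\setminus\{o\}}|x|^{\frac{p}{p-1}}|u(y)|^p\dx\Bigr)^{\frac{p-1}{p}},
\]
and the analogous bound for the exterior term. Here I would invoke Lemma \ref{help_lem_3} to justify moving the power $1/p$ and $(p-1)/p$ in and out of the suprema cleanly on compact sets.

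Next I would bound the first factor on each right-hand side using Theorem \ref{main_th_rad}: both $\int_{\g}\sup_{\bar{B}(o;|x|)\setminus\{o\}}\frac{|u(y)|^p}{|x|^p}\dx$ and $\int_{\g}\sup_{B^c(o;|x|)}\frac{|u(y)|^p}{|y|^p}\dx$ are dominated by $\max$ of the two, which by \eqref{eqn_main_th_rad} is at most $\bigl|\frac{p}{N-p}\bigr|^p\int_{\g}\bigl|\frac{x}{|x|}\cdot\nabla u(x)\bigr|^p\dx$. Taking $p$-th roots gives a factor $\bigl|\frac{p}{N-p}\bigr|\bigl(\int_{\g}\bigl|\frac{x}{|x|}\cdot\nabla u\bigr|^p\dx\bigr)^{1/p}$ in front of each term. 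Finally, taking the maximum over the two cases and pulling the common gradient factor and the constant outside the $\max$ produces exactly the claimed inequality; the $\max$ of the two weighted second factors reassembles the bracketed term in the statement.

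The main obstacle — really the only delicate point — is the bookkeeping of the weight $|x|^{p/(p-1)}$ (or $|y|^{p/(p-1)}$) relative to the supremum: one must verify that $\sup_{0<s\le r}\frac{|u(s)|^p}{r^p}\cdot r^{p} = \sup_{0<s\le r}|u(s)|^p$ and more importantly that after the H\"older split the residual weight genuinely appears as $\sup_{0<s\le r}\bigl(r^{p/(p-1)}|u(s)|^p\bigr)$ and not something with the weight evaluated at the wrong radius; for the exterior term the weight $|y|^{p/(p-1)}$ is naturally inside the sup over $y$, which is consistent. Everything else is a routine application of H\"older and the already-established Theorem \ref{main_th_rad}, so no genuinely new estimate is required.
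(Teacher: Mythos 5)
Your proposal is correct and follows essentially the same route as the paper: treat the interior and exterior terms separately, split the integrand pointwise so that Hölder's inequality produces the factors $\bigl(\int_{\g}\sup\frac{|u(y)|^p}{|x|^p}\dx\bigr)^{1/p}$ (resp.\ with $|y|^p$) and $\bigl(\int_{\g}\sup|x|^{\frac{p}{p-1}}|u(y)|^{p}\dx\bigr)^{\frac{p-1}{p}}$ (resp.\ with $|y|^{\frac{p}{p-1}}$), commute powers with suprema via Lemma \ref{help_lem_3}, bound the first factor by the maximum and apply Theorem \ref{main_th_rad}, and finally take the maximum over the two cases. The only quibble is notational: your intermediate second Hölder factor written as $|x|\bigl(\sup\frac{|u(y)|^p}{|x|^p}\bigr)^{\frac{p-1}{p}}$ differs from the intended $\bigl(\sup|x|^{\frac{p}{p-1}}|u(y)|^{p}\bigr)^{\frac{p-1}{p}}$ by a factor $|x|^{p-1}$, but the displayed inequality you actually use is the correct one (an identity pointwise for the interior term, and the product-of-sups bound for the exterior term), which is exactly the paper's argument.
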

	\begin{proof}
		For $u\in C_{c,rad}^\infty(\g\setminus\{o\})$ we estimate each term separately. Let us begin with
		\begin{align*}
			&\int_{\g}\sup_{\bar{B}(o\:;\:|x|)\setminus\{o\}}|u(y)|^p\dx\\
			&\leq\int_{\g}\bigg(\sup_{\bar{B}(o\:;\:|x|)\setminus\{o\}}|x||u(y)|^{p-1}\bigg)\bigg(\sup_{\bar{B}(o\:;\:|x|)\setminus\{o\}}\frac{|u(y)|}{|x|}\bigg)\dx\\
			&\leq\bigg(\int_{\g}\bigg(\sup_{\bar{B}(o\:;\:|x|)\setminus\{o\}}|x||u(y)|^{p-1}\bigg)^{\frac{p}{p-1}}\dx\bigg)^{\frac{p-1}{p}} \bigg(\int_{\g}\bigg(\sup_{\bar{B}(o\:;\:|x|)\setminus\{o\}}\frac{|u(y)|}{|x|}\bigg)^p\dx\bigg)^{\frac{1}{p}}\\
			&=\bigg(\int_{\g}\sup_{\bar{B}(o\:;\:|x|)\setminus\{o\}}|x|^\frac{p}{p-1}|u(y)|^{p}\dx\bigg)^{\frac{p-1}{p}} \bigg(\int_{\g}\sup_{\bar{B}(o\:;\:|x|)\setminus\{o\}}\frac{|u(y)|^p}{|x|^p}\dx\bigg)^{\frac{1}{p}}\\
			&=\bigg(\int_{\g}\sup_{\bar{B}(o\:;\:|x|)\setminus\{o\}}|x|^\frac{p}{p-1}|u(y)|^{p}\dx\bigg)^{\frac{p-1}{p}}\times\\& \quad\quad\quad\quad\quad\quad\quad\quad\bigg(\int_{\rn} \max\biggl\{ \sup_{\bar{B}(o\:;\:|x|)\setminus\{o\}}\frac{|u(y)|^p}{|x|^p}\:,\:\sup_{B^c(o\:;\:|x|)}\frac{|u(y)|^p}{|y|^p}\biggr\}\dx\bigg)^{\frac{1}{p}}\\
			&\leq \bigg|\frac{p}{N-p}\bigg| \bigg(\int_{\g}\sup_{\bar{B}(o\:;\:|x|)\setminus\{o\}}|x|^\frac{p}{p-1}|u(y)|^{p}\dx\bigg)^{\frac{p-1}{p}}\bigg(\int_{\g}\bigg|\frac{x}{|x|}\cdot \nabla  u(x)\bigg|^p\dx\bigg)^{\frac{1}{p}}.
		\end{align*}
		In the above we have used the H\"older inequality, Lemma \ref{help_lem_3} twice, and finally Theorem \ref{main_th_rad} step by step. Exploiting the similar steps as earlier, we compute
		\begin{align*}
			&\int_{\g}\sup_{B^c(o\:;\:|x|)}|u(y)|^p\dx\\
			&\leq \int_{\g}\bigg(\sup_{B^c(o\:;\:|x|)}|y||u(y)|^{p-1}\bigg)\bigg(\sup_{B^c(o\:;\:|x|)}\frac{|u(y)|}{|y|}\bigg)\dx\\
			&\leq \bigg(\int_{\g}\bigg(\sup_{B^c(o\:;\:|x|)}|y||u(y)|^{p-1}\bigg)^{\frac{p}{p-1}}\dx\bigg)^{\frac{p-1}{p}} \bigg(\int_{\g}\bigg(\sup_{B^c(o\:;\:|x|)}\frac{|u(y)|}{|y|}\bigg)^p\dx\bigg)^{\frac{1}{p}}\\
			&=\bigg(\int_{\g}\sup_{B^c(o\:;\:|x|)}|y|^\frac{p}{p-1}|u(y)|^{p}\dx\bigg)^{\frac{p-1}{p}} \bigg(\int_{\g}\sup_{B^c(o\:;\:|x|)}\frac{|u(y)|^p}{|y|^p}\dx\bigg)^{\frac{1}{p}}\\
			&=\bigg(\int_{\g}\sup_{B^c(o\:;\:|x|)}|y|^\frac{p}{p-1}|u(y)|^{p}\dx\bigg)^{\frac{p-1}{p}}\times\\& \quad\quad\quad\quad\quad\quad\quad\quad\bigg(\int_{\rn} \max\biggl\{ \sup_{\bar{B}(o\:;\:|x|)\setminus\{o\}}\frac{|u(y)|^p}{|x|^p}\:,\:\sup_{B^c(o\:;\:|x|)}\frac{|u(y)|^p}{|y|^p}\biggr\}\dx\bigg)^{\frac{1}{p}}\\
			&\leq \bigg|\frac{p}{N-p}\bigg| \bigg(\int_{\g}\sup_{B^c(o\:;\:|x|)}|y|^\frac{p}{p-1}|u(y)|^{p}\dx\bigg)^{\frac{p-1}{p}}\bigg(\int_{\g}\bigg|\frac{x}{|x|}\cdot \nabla  u(x)\bigg|^p\dx\bigg)^{\frac{1}{p}}.
		\end{align*}
		Finally, combining both cases, we arrive at the desired result. 
	\end{proof}

Now we state the version for non-radial functions.
	\begin{theorem}
	Let $1\leq N<p<\infty$. Then for any $u\in C_{c}^\infty(\rn\setminus \{o\})$, the following uncertainty principle holds:
	\begin{align*}
		&\max \biggl\{\int_{0}^\infty r^{N-1} \sup_{0<s\leq r}\int_{\sn}\frac{|u(s\sigma)|^p}{r^p}\dsn\dr,  \int_{0}^\infty  r^{N-1}\sup_{r\leq s<\infty}\int_{\sn}\frac{|u(s\sigma)|^p}{s^p}\dsn\dr\biggr\}\\& \leq \bigg|\frac{p}{N-p}\bigg|\max\biggl\{ \bigg(\int_{0}^\infty r^{N-1}\sup_{0<s\leq r}\int_{\sn}r^{\frac{p}{p-1}}|u(s\sigma)|^p\dsn\dr\bigg)^{\frac{p-1}{p}},\\&\quad\quad\bigg(\int_{0}^\infty r^{N-1}\sup_{r\leq s <\infty}\int_{\sn}s^{\frac{p}{p-1}}|u(s\sigma)|^p\dsn\dr\bigg)^{\frac{p-1}{p}}\biggr\}\bigg(\int_{\g}\bigg|\frac{x}{|x|}\cdot \nabla  u(x)\bigg|^p\dx\bigg)^{\frac{1}{p}}.
	\end{align*}		
\end{theorem}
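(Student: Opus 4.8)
The plan is to mimic exactly the proof strategy used for the radial uncertainty principle in the previous theorem, but now starting from the non-radial improved Hardy inequality \eqref{eqn_main_th} of Theorem \ref{main_th} instead of Theorem \ref{main_th_rad}, and replacing each pointwise supremum over balls by the corresponding supremum of the spherical $L^p$-average appearing in the polar-coordinate formulation. First I would treat the two terms inside the max on the left-hand side separately. For the first term, I would write
\[
\int_{0}^\infty r^{N-1}\sup_{0<s\leq r}\int_{\sn}|u(s\sigma)|^p\dsn\dr
= \int_{0}^\infty r^{N-1}\sup_{0<s\leq r}\int_{\sn} r\,|u(s\sigma)|^{p-1}\cdot\frac{|u(s\sigma)|}{r}\,\dsn\,\dr,
\]
and then estimate the inner supremum of the product by the product of suprema (since $\sup(ab)\leq(\sup a)(\sup b)$ for nonnegative quantities; here I use that $\sup_{0<s\leq r}$ of a product over the ball is bounded by the product of the sups, and Lemma \ref{help_lem_3} to pull out the $p$-th power as needed). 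This reduces the term to
\[
\int_{0}^\infty r^{N-1}\Big(\sup_{0<s\leq r}\int_{\sn} r^{\frac{p}{p-1}}|u(s\sigma)|^p\dsn\Big)^{\frac{p-1}{p}}\Big(\sup_{0<s\leq r}\int_{\sn}\frac{|u(s\sigma)|^p}{r^p}\dsn\Big)^{\frac 1p}\dr.
\]

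Next I would apply Hölder's inequality in $r$ with exponents $\frac{p}{p-1}$ and $p$ against the measure $r^{N-1}\dr$, so the first term is bounded by
\[
\Big(\int_0^\infty r^{N-1}\sup_{0<s\leq r}\int_{\sn}r^{\frac{p}{p-1}}|u(s\sigma)|^p\dsn\dr\Big)^{\frac{p-1}{p}}\Big(\int_0^\infty r^{N-1}\sup_{0<s\leq r}\int_{\sn}\frac{|u(s\sigma)|^p}{r^p}\dsn\dr\Big)^{\frac 1p}.
\]
The second factor here is one of the two terms inside the max on the left of \eqref{eqn_main_th}, hence is bounded by the whole max, which by Theorem \ref{main_th} is at most $\big|\frac{p}{N-p}\big|^p\int_{\rn}\big|\frac{x}{|x|}\cdot\nabla u(x)\big|^p\dx$; taking the $\frac 1p$-th power gives the factor $\big|\frac{p}{N-p}\big|\big(\int_{\rn}|\tfrac{x}{|x|}\cdot\nabla u|^p\dx\big)^{1/p}$. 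The analogous computation for the second term, using $\frac{|u(s\sigma)|}{s}$ and $s\,|u(s\sigma)|^{p-1}$ and the supremum over $r\leq s<\infty$, produces the bound with $s^{\frac{p}{p-1}}$ in place of $r^{\frac{p}{p-1}}$. Finally I would combine: the original left-hand side is the max of the two terms, each of which we have bounded by $\big|\frac{p}{N-p}\big|$ times (the corresponding weighted moment factor)$^{(p-1)/p}$ times the common gradient factor$^{1/p}$; bounding each weighted moment factor by the max of the two yields precisely the stated inequality.

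The main obstacle, as in the radial case, is purely bookkeeping: one must be careful that the inequality $\sup(ab)\le(\sup a)(\sup b)$ and the interchange of the $p$-th power with the supremum (Lemma \ref{help_lem_3}) are applied correctly over the ball $\bar B(o;r)$ and its complement, respectively, and that when passing from $\int_{\sn}\frac{|u(s\sigma)|^p}{r^p}\dsn$ inside a $\frac 1p$-power we genuinely recover one of the two max-terms of \eqref{eqn_main_th} rather than something larger; since dropping the max to one of its arguments only decreases the quantity, this direction is safe. There is no sharpness claim to prove here, so once the two symmetric estimates are assembled the proof concludes by writing "combining both cases, we arrive at the desired result."
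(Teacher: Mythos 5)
Your proposal is correct and follows essentially the same route as the paper's own proof: split $|u(s\sigma)|^p$ as $\frac{|u(s\sigma)|}{r}\cdot r|u(s\sigma)|^{p-1}$ (resp.\ with $s$), apply H\"older on $\sn$, bound the sup of a product by the product of sups, apply H\"older in $r$ against $r^{N-1}\dr$, and control the resulting factor $\big(\int_0^\infty r^{N-1}\sup\int_{\sn}\frac{|u|^p}{r^p}\dsn\dr\big)^{1/p}$ by the max and then by Theorem \ref{main_th}. The only cosmetic difference is that you fold the spherical H\"older step into the ``sup of product'' remark rather than displaying it separately, but the displayed intermediate bound and the conclusion coincide with the paper's argument.
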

\begin{proof}
	Let $u\in C_c^\infty(\g\setminus\{o\})$ and using polar coordinates we have
\begin{align*}
	&\int_{0}^\infty r^{N-1} \sup_{0<s\leq r}\int_{\sn}|u(s\sigma)|^p\dsn\dr\\
	&=\int_{0}^\infty r^{N-1} \sup_{0<s\leq r}\int_{\sn}\frac{|u(s\sigma)|}{r}\: \cdot r \: |u(s\sigma)|^{p-1}\dsn\dr\\
	&\leq \int_{0}^\infty r^{N-1}\sup_{0<s\leq r}\bigg(\int_{\sn}\frac{|u(s\sigma)|^p}{r^p}\dsn\bigg)^{\frac{1}{p}}\bigg(\int_{\sn}r^{\frac{p}{p-1}}|u(s\sigma)|^p\dsn\bigg)^{\frac{p-1}{p}}\dr\\
	&\leq \int_{0}^\infty r^{N-1}\sup_{0<s\leq r}\bigg(\int_{\sn}\frac{|u(s\sigma)|^p}{r^p}\dsn\bigg)^{\frac{1}{p}}\sup_{0<s\leq r}\bigg(\int_{\sn}r^{\frac{p}{p-1}}|u(s\sigma)|^p\dsn\bigg)^{\frac{p-1}{p}}\dr\\
	&= \int_{0}^\infty \biggl\{r^{\frac{N-1}{p}}\sup_{0<s\leq r}\bigg(\int_{\sn}\frac{|u(s\sigma)|^p}{r^p}\dsn\bigg)^{\frac{1}{p}}\biggr\}\times\\&\quad\quad\quad\quad\biggl\{r^{\frac{(N-1)(p-1)}{p}}\sup_{0<s\leq r}\bigg(\int_{\sn}r^{\frac{p}{p-1}}|u(s\sigma)|^p\dsn\bigg)^{\frac{p-1}{p}}\biggr\}\dr\\
	&\leq \bigg(\int_{0}^\infty r^{N-1}\sup_{0<s\leq r}\int_{\sn}\frac{|u(s\sigma)|^p}{r^p}\dsn\dr\bigg)^{\frac{1}{p}}\times\\
	&\quad\quad\quad\quad\bigg(\int_{0}^\infty r^{N-1}\sup_{0<s\leq r}\int_{\sn}r^{\frac{p}{p-1}}|u(s\sigma)|^p\dsn\dr\bigg)^{\frac{p-1}{p}}\\
	&\leq\bigg(\max \biggl\{\int_{0}^\infty r^{N-1} \sup_{0<s\leq r}\int_{\sn}\frac{|u(s\sigma)|^p}{r^p}\dsn\dr, \\
	&\quad\quad\quad\quad\int_{0}^\infty  r^{N-1}\sup_{r\leq s<\infty}\int_{\sn}\frac{|u(s\sigma)|^p}{s^p}\dsn\dr\biggr\}\bigg)^{\frac{1}{p}}\times\\&\quad\quad\quad\quad\quad\quad\quad\quad\bigg(\int_{0}^\infty r^{N-1}\sup_{0<s\leq r}\int_{\sn}r^{\frac{p}{p-1}}|u(s\sigma)|^p\dsn\dr\bigg)^{\frac{p-1}{p}}\\
	&\leq \bigg|\frac{p}{N-p}\bigg| \bigg(\int_{\g}\bigg|\frac{x}{|x|}\cdot \nabla  u(x)\bigg|^p\dx\bigg)^{\frac{1}{p}}\bigg(\int_{0}^\infty r^{N-1}\sup_{0<s\leq r}\int_{\sn}r^{\frac{p}{p-1}}|u(s\sigma)|^p\dsn\dr\bigg)^{\frac{p-1}{p}}.
\end{align*}
	In the above we have exploited  the H\"older inequality, Lemma \ref{help_lem_3}, and finally Theorem \ref{main_th} step by step. Following the similar steps as earlier, we compute
	\begin{align*}
	&\int_{0}^\infty r^{N-1} \sup_{r\leq s<\infty}\int_{\sn}|u(s\sigma)|^p\dsn\dr\\
	&=\int_{0}^\infty r^{N-1} \sup_{r\leq s<\infty}\int_{\sn}\frac{|u(s\sigma)|}{s}\cdot s\:|u(s\sigma)|^{p-1}\dsn\dr\\
	&\leq \int_{0}^\infty r^{N-1}\sup_{r\leq s<\infty}\bigg(\int_{\sn}\frac{|u(s\sigma)|^p}{s^p}\dsn\bigg)^{\frac{1}{p}}\bigg(\int_{\sn}s^{\frac{p}{p-1}}|u(s\sigma)|^p\dsn\bigg)^{\frac{p-1}{p}}\dr\\
	&\leq \int_{0}^\infty r^{N-1}\sup_{r\leq s<\infty}\bigg(\int_{\sn}\frac{|u(s\sigma)|^p}{s^p}\dsn\bigg)^{\frac{1}{p}}\sup_{r\leq s<\infty}\bigg(\int_{\sn}s^{\frac{p}{p-1}}|u(s\sigma)|^p\dsn\bigg)^{\frac{p-1}{p}}\dr\\
	&=\int_{0}^\infty \biggl\{r^{\frac{N-1}{p}}\sup_{r\leq s<\infty}\bigg(\int_{\sn}\frac{|u(s\sigma)|^p}{s^p}\dsn\bigg)^{\frac{1}{p}}\biggr\}\times\\
	&\quad\quad\quad\quad\biggl\{r^{\frac{(N-1)(p-1)}{p}}\sup_{ r\leq s<\infty}\bigg(\int_{\sn}s^{\frac{p}{p-1}}|u(s\sigma)|^p\dsn\bigg)^{\frac{p-1}{p}}\biggr\}\dr\\
	&\leq \bigg(\int_{0}^\infty r^{N-1}\sup_{r\leq s<\infty}\int_{\sn}\frac{|u(s\sigma)|^p}{s^p}\dsn\dr\bigg)^{\frac{1}{p}}\times\\
	&\quad\quad\quad\quad\bigg(\int_{0}^\infty r^{N-1}\sup_{ r\leq s<\infty}\int_{\sn}s^{\frac{p}{p-1}}|u(s\sigma)|^p\dsn\dr\bigg)^{\frac{p-1}{p}}\\
	&\leq\bigg(\max \biggl\{\int_{0}^\infty r^{N-1} \sup_{0<s\leq r}\int_{\sn}\frac{|u(s\sigma)|^p}{r^p}\dsn\dr, \\& \quad\quad\quad\quad\int_{0}^\infty  r^{N-1}\sup_{r\leq s<\infty}\int_{\sn}\frac{|u(s\sigma)|^p}{s^p}\dsn\dr\biggr\}\bigg)^{\frac{1}{p}}\times\\
	&\quad\quad\quad\quad\quad\quad\bigg(\int_{0}^\infty r^{N-1}\sup_{ r\leq s<\infty}\int_{\sn}s^{\frac{p}{p-1}}|u(s\sigma)|^p\dsn\dr\bigg)^{\frac{p-1}{p}}\\
	&\leq \bigg|\frac{p}{N-p}\bigg| \bigg(\int_{\g}\bigg|\frac{x}{|x|}\cdot \nabla  u(x)\bigg|^p{\rm d}x\bigg)^{\frac{1}{p}}\bigg(\int_{0}^\infty r^{N-1}\sup_{r\leq  s <\infty}\int_{\sn}s^{\frac{p}{p-1}}|u(s\sigma)|^p\dsn\dr\bigg)^{\frac{p-1}{p}}.
\end{align*}
	By combining both cases, we immediately arrive at the desired result. 
\end{proof}

	\section*{Acknowledgements} 
	\noindent 
	The authors would like to thank Rupert Frank for pointing out a problem in the first version of this paper.  This project was discussed when the authors met at the Ghent Analysis \& PDE center at Ghent University in Summer 2022. P. R. and D. S.  would like to thank the university for their support and hospitality. The authors were supported by the FWO Odysseus 1 grant G.0H94.18N: Analysis and Partial Differential Equations and by the Methusalem programme of the Ghent University Special Research Fund (BOF) (Grant number 01M01021). M. R. was also supported by EPSRC grant EP/R003025/2.
	
	\medspace

\end{document}